\newtheorem{theorem}{Theorem}
\newtheorem{assumption}{Assumption}
\theoremstyle{example}
\newtheorem{lemma}{Lemma}
\newtheorem{remark}{Remark}
\theoremstyle{remark}
\title{Three Asymptotic Regimes for Ranking and Selection with General Sample Distributions}
\author{Jing Dong, Yi Zhu}
\date{} % delete this line to display the current date
\begin{document}
\maketitle

\begin{abstract}
In this paper, we study three asymptotic regimes that can be applied to ranking and selection (R\&S) problems with general sample distributions.
These asymptotic regimes are constructed by sending particular problem parameters (probability of incorrect selection, 
smallest difference in system performance that we deem worth detecting) to zero. We establish asymptotic validity and efficiency of the corresponding R\&S procedures in each regime. 
We also analyze the connection among different regimes and compare the pre-limit performances of corresponding algorithms. 
\end{abstract}

\section{Introduction}
\label{sec:intro}

Ranking and selection (R\&S) refers to the statistical procedure to select the simulated systems with the best performance (largest or smallest mean) among a finite number of alternatives with high probability. 
Most of the existing procedures are constructed under the assumption that the samples follow a Gaussian distribution or some other rather restricted class of distributions (e.g. sub-Gaussian, bounded support)
to gain control over the probability of correct selection. 
When these assumptions are violated, the desired performance can only be guaranteed in an asymptotic sense. 

Asymptotic analysis is achieved by sending the sample size to infinity. 
However, simply sending the sample size to infinity will not convey any meaningful information. 
By law of large numbers, it simply implies that the probability of correct selection will converge to one. 
To define the asymptotic regimes in a proper and meaningful way, we consider two parameters that
characterize the ``difficulty" of the problem: 1) the difference between the best system and the second best system, 
which we denote as $\delta$; 2) the probability of incorrect selection (PIS), which we denote as $\alpha$.
Under the indifference zone formulation, $\delta$ denotes the smallest difference in system performance
that we deem worth detecting. Thus, $\delta$ is also known as the indifference-zone parameter.
As either one of these parameters gets smaller, it requires more samples to achieve the desired performance.

In this paper, we study asymptotic regimes that can be applied for R\&S problems with general sample distributions.
The first limiting regime is called the central limit theorem regime, which is derived by sending $\delta$ to zero. 
The second limiting regime is called the large deviation regime, which is derived by sending $\alpha$ to zero. 
The third limiting regime is called the moderate deviation regime, which is derived by sending $(\alpha,\delta)$ to zero at an appropriate rate.
We present the theoretical foundation of each limiting regime and develop sequential stopping procedures for problems with unknown variance.

The central limit theorem regime has been applied in the R\&S literature, mostly under the the indifference zone formulation. 
\cite{Mukhopadhyay:1994} defined that an indifference zone procedure is asymptotically consistent if 
$\lim_{\delta \rightarrow 0} PIS \leq \alpha$.
\cite{Robbins:1968} propose a sequential stopping procedure for R\&S problems with unknown variance and show that their algorithm is asymptotically consistent. 
\cite{Kim:2005} develop a fully sequential selection procedure for steady-state simulation that is shown to be asymptotically consistent. 
This limiting regime has also been used to establish the asymptotic validity of sequential stopping procedures to construct fixed-width confidence intervals \cite{Glynn:1992}. 
The limit is achieved by sending the width of the confidence interval to zero. 
We will provide more details about this limiting regime in \S \ref{sec:clt}.

The large deviation regime has been applied in the ordinal optimization literature. 
The appealing fact is that while the width of the confidence interval decreases at rate $1/\sqrt{n}$ (due to the central limit theorem),  the PIS actually decays exponentially fast in $n$ (due to the large deviation theory) \cite{Dai:1996}. 
Results from this limiting regime, in particular, the large deviation rate function of the PIS, 
has been applied to find the optimal budget allocation rules, i.e. to minimize PIS under a fixed budget (see for example \cite{Glynn:2004}, \cite{Szechtman:2008} and \cite{Hunter:2013}). The large deviation type of upper bound on the probability of incorrect selection has also been applied in the multi-arm bandit literature \cite{Bubeck:2012}. Compared to the R\&S literature, the key performance measure for the multi-arm bandit literature is the regret, which measures the cumulative opportunity cost of not knowing the optimal system. Two of the well-known sampling strategies in this literature is the upper confidence bound strategy and the Thompson sampling. Both are shown to achieve an $O(log(n))$ regret bound, under the assumption that we have access to the large deviation rate function or an upper bound of the rate function in closed form. This assumption imposes constrains on the type of sample distributions we can work with. We will survey more details about this limiting regime in \S \ref{sec:ldp}. 

To the best of our knowledge, the moderate deviation regime studied in this paper has not been applied in the R\&S or the ordinal optimization literature,  though the moderate deviation theory is well-studied in the applied probability literature \cite{Dembo:1998}. 
As we shall explain in subsequent development (\S \ref{sec:mdp}), this asymptotic regime tends to strike a balance 
between the central limit theorem regime and the large deviation regime. 
 
\section{The Three Asymptotic Regimes} \label{sec:main}
To demonstrate the basic ideas, we restrict our discussion to the comparison 
between two systems. To formalize the asymptotic analysis,
we first define a suitable sequence of distributions. 
Let $X_1$ and $X_2^0$ be two random variables with the same mean $\mu_1$ and potentially different variance, $\sigma_1^2<\infty$ and $\sigma_2^2<\infty$, respectively. 
We define $X_2^{\delta}$, indexed by $\delta$, as a sequence of random variables with cumulative distribution function $F_2^{\delta}(x)=F_2^0(x+\delta)$, i.e. $X_2^{\delta} \,{\buildrel d \over =}\, X_2^{0}-\delta$. In particular,
$\mu_2^{\delta}=\mu_1-\delta$. 
We denote $X_{1,k}$, $k\geq 1$, as i.i.d. copies of $X_1$, and $X_{2,k}^\delta$, $k\geq 1$, as i.i.d. copies of $X_{2}^{\delta}$. 
Let
$$\bar X_1(n):=\frac{1}{n}\sum_{k=1}^{n} X_{1,k} ~~~\mbox{ and }~~~ \bar X_2^{\delta}(n):=\frac{1}{n}\sum_{k=1}^{n}X_{2,k}^{\delta}$$
denote the sample means of $X_1$ and $X_2^{\delta}$.  We also denote the sample variances as
$$S_1^2(n):=\frac{1}{n-1}\sum_{k=1}^{n}\left(X_{1,k}-\bar X_1(n)\right)^2 ~~~\mbox{ and }~~~ S_2^{\delta,2}(n):=\frac{1}{n-1}\sum_{k=1}^{n}\left(X_{2,k}^{\delta}-\bar X_2^{\delta}(n)\right)^2.$$
Our goal is to select the system with the largest mean value when comparing $X_1$ and $X_2^{\delta}$. In particular, if we draw $n_1$ samples from $X_1$ and $n_2$ samples from $X_2^{\delta}$, and select the system with the largest sample mean, then
$$PIS=P\left(\bar X_1(n_1)<\bar X_2^{\delta}(n_2)\right).$$

\begin{remark}
Other definitions of the sequence of random variables may also work. In general, 
we need to assume that the variances of $X_{2}^{\delta}$'s do not depend on $\delta$. 
\end{remark}

Recall that $\alpha$ denotes the required level of the PIS, 
and $\delta$ denotes the difference in mean between the two systems ($X_1$ and $X_2^{\delta}$).
We consider the following three asymptotic regimes.
i) Keep $\alpha$ fixed and send $\delta$ to zero;
ii) Keep $\delta$ fixed and send $\alpha$ to zero;
iii) Send both $\alpha$ and $\delta$ to zero at an appropriate rate.
We shall elaborate on each of these three regimes next.

\subsection{The Central Limit Theorem Regime}
\label{sec:clt}
In this limiting regime, we keep $\alpha$ fixed and send $\delta$ to zero. We start with the known variance case.
For fixed $q_1,q_2 > 1$ satisfying $1/q_1+1/q_2=1$, we set the required sample sizes as
$$n_i(\delta)=\frac{z_{\alpha}^2\sigma_i^2}{\delta^2} q_i ~~~\mbox{ for $i=1,2$.}$$
where $z_{\alpha}$ is the $\alpha$-th upper tail quantile of a standard normal distribution. 
We draw $n_i(\delta)$ samples from system i, and pick the system with the largest mean. The following theorem
establishes the asymptotic validity of this procedure.
\begin{theorem} \label{th:clt}
Under the assumption that $\sigma_i^2<\infty$ for $i=1,2$,
$$\lim_{\delta\rightarrow 0} P(\bar X_1(n_1(\delta))<\bar X_2^{\delta}(n_2(\delta)))=\alpha.$$
\end{theorem}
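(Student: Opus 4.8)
The plan is to rewrite the probability of incorrect selection in terms of a single standardized quantity and apply the central limit theorem together with Slutsky's theorem. First I would observe that
$$P\bigl(\bar X_1(n_1(\delta)) < \bar X_2^{\delta}(n_2(\delta))\bigr) = P\bigl(\bar X_1(n_1(\delta)) - \bar X_2^{\delta}(n_2(\delta)) < 0\bigr),$$
and that the random variable $D(\delta) := \bar X_1(n_1(\delta)) - \bar X_2^{\delta}(n_2(\delta))$ has mean $\mu_1 - \mu_2^{\delta} = \delta$ and variance $\sigma_1^2/n_1(\delta) + \sigma_2^2/n_2(\delta)$. Plugging in the prescribed sample sizes $n_i(\delta) = z_\alpha^2 \sigma_i^2 q_i / \delta^2$, the variance becomes
$$\operatorname{Var}(D(\delta)) = \frac{\delta^2}{z_\alpha^2}\left(\frac{1}{q_1} + \frac{1}{q_2}\right) = \frac{\delta^2}{z_\alpha^2},$$
using the constraint $1/q_1 + 1/q_2 = 1$. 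Hence the event $\{D(\delta) < 0\}$ is exactly the event $\{(D(\delta) - \delta)/(\delta/z_\alpha) < -z_\alpha\}$, so it suffices to show that the standardized difference $Z(\delta) := (D(\delta) - \delta) z_\alpha / \delta$ converges in distribution to a standard normal as $\delta \to 0$; the conclusion then follows since $P(Z(\delta) < -z_\alpha) \to \Phi(-z_\alpha) = \alpha$ by definition of the upper-tail quantile $z_\alpha$ (using continuity of the normal CDF at $-z_\alpha$).

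To establish the asymptotic normality of $Z(\delta)$, I would treat the two sample means separately. Note that as $\delta \to 0$ both $n_1(\delta)$ and $n_2(\delta)$ tend to infinity, so a triangular-array central limit theorem applies: for each $i$, $\sqrt{n_i(\delta)}\,(\bar X_i(n_i(\delta)) - \mu_i)/\sigma_i \Rightarrow N(0,1)$. More carefully, since $n_i(\delta)$ need not be an integer one should either take $n_i(\delta) = \lceil z_\alpha^2 \sigma_i^2 q_i/\delta^2 \rceil$ and check that the rounding does not affect the limit, or invoke the Anscombe–Donsker functional CLT / a standard CLT along the subsequence of sample sizes. The two sample means are independent (drawn from the two independent systems), so the vector of their standardizations converges jointly to a bivariate standard normal. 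Writing $Z(\delta)$ as a fixed linear combination of these two standardizations — with coefficients $z_\alpha \sigma_i \sqrt{q_i}/(\delta \sqrt{n_i(\delta)}) \cdot (\pm 1)$, which by the choice of $n_i(\delta)$ are the constants $\pm 1/\sqrt{q_i}$ whose squares sum to one — the continuous mapping theorem gives $Z(\delta) \Rightarrow N(0, 1/q_1 + 1/q_2) = N(0,1)$.

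The main obstacle, and the only genuinely delicate point, is the interplay between the index $\delta$ of the distribution sequence and the sample size: the summands $X_{2,k}^\delta$ themselves change with $\delta$, so one must verify a Lindeberg-type condition for the triangular array of $X_2^\delta$-samples rather than quoting the classical i.i.d. CLT directly. However, because $X_2^\delta \stackrel{d}{=} X_2^0 - \delta$ is merely a deterministic shift of a fixed random variable $X_2^0$ with $\sigma_2^2 < \infty$, the centered summands $X_{2,k}^\delta - \mu_2^\delta \stackrel{d}{=} X_{2,k}^0 - \mu_1$ do not depend on $\delta$ at all, so the Lindeberg condition reduces to that of the single fixed variable $X_2^0$ and is automatic. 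Thus the shift structure assumed in the setup (and highlighted in the Remark) is exactly what makes the argument go through cleanly, and no uniform-integrability issues arise beyond finiteness of the second moments.
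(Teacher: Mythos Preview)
Your proposal is correct and follows essentially the same approach as the paper: center and standardize the difference of sample means so that the event becomes $\{Z(\delta)<-z_\alpha\}$, then invoke the CLT (plus independence of the two systems) to conclude $Z(\delta)\Rightarrow N(0,1)$. You are more careful than the paper about the triangular-array/Lindeberg issue and the shift structure of $X_2^\delta$, but the argument is the same; note only a small slip in your displayed coefficients---they should be $z_\alpha\sigma_i/(\delta\sqrt{n_i(\delta)})=1/\sqrt{q_i}$, without the extra $\sqrt{q_i}$ in the numerator.
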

\begin{proof}[Proof of Theorem \ref{th:clt}]
We notice that
\begin{eqnarray*}
&&P\left(\bar X_1(n_1(\delta)) < \bar X_2^{\delta}(n_2(\delta))\right)\\
&=& P\left(\left(\bar X_1(n_1(\delta))-\mu_1\right)-\left((\bar X_2^{\delta}(n_2(\delta))+\delta)-\mu_1\right)<-\delta\right)\\
&=& P\left(\frac{\sqrt{n_1(\delta)}(\bar X_1(n_1(\delta))-\mu_1)}{\sigma_1\sqrt{q_1}}-\frac{\sqrt{n_2(\delta)}(\bar X_2(n_2(\delta))-\mu_1)}{\sigma_2\sqrt{q_2}}<-z_{\alpha} \right)\\
&\rightarrow& P(N(0,1)<-z_{\alpha}) ~~~\mbox{ as $\delta\rightarrow 0$}
\end{eqnarray*}
The convergence follows by Central Limit Theorem.
\end{proof}

We next introduce some possible choices of the parameter $q_i$'s.
i) If we are to minimize $n_1(\delta)+n_2(\delta)$ for each value of $\delta$,
then we set $q_1=1+\sigma_2/\sigma_1$ and $q_2=1+\sigma_1/\sigma_2$. In this case
$$n_i^*(\delta)=\frac{z_{\alpha}^2(\sigma_1+\sigma_2)}{\delta^2}\sigma_i ~~~\mbox{ for $i=1,2$.}$$
ii) If we want to draw equal amount of samples from both systems, 
then we set $q_i=(\sigma_1^2+\sigma_2^2)/\sigma_i^2$, for $i=1,2$. In this case 
$$n_1^{e}(\delta)=n_2^{e}(\delta)=z_{\alpha}^2(\sigma_1^2+\sigma_2^2)/\delta^2.$$ 
iii) If we want to run the simulation without taking into account the information of the other system, 
then we can set, for example, $q_1=q_2=2$. In this case
$$n_i^{in}(\delta)=\frac{2z_{\alpha}^2\sigma_i^2}{\delta^2} ~~~\mbox{ for $i=1,2$.}$$

When the variances are not known. We can apply the following sequential stopping procedure to decide the appropriate number of samples needed.    
In this paper, we will focus on the case of equal sample sizes only. 
We define the stopping time
$$\kappa(\delta):=\inf\left\{n \geq \delta^{-1}: z_{\alpha}^2\frac{S_1^2(n)+S_2^{\delta,2}(n)}{n}<\delta^2\right\},$$
where $\delta^{-1}$ is introduced to avoid early stopping. 
We keep sampling the two systems until the total sample variance over the sample size is smaller than 
$\delta^2/z_{\alpha}^2$, and then we pick the system with the largest sample mean.
The following theorem establishes the asymptotic validity of the sequential stopping procedure.

\begin{theorem} \label{th:clt_seq}
Under the assumption that $\sigma_i^2<\infty$ for $i=1,2$, 
$$\lim_{\delta\rightarrow 0} P\left(\bar X_1(\kappa(\delta))<\bar X_2^{\delta}(\kappa(\delta))\right)=\alpha.$$
\end{theorem}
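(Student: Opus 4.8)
The plan is to reduce the statement to an application of Anscombe's random-index central limit theorem, after first identifying the asymptotic order of the stopping time $\kappa(\delta)$. It is convenient to couple the two families by writing $X_{2,k}^{\delta}=X_{2,k}^{0}-\delta$ for a single realization of the i.i.d.\ sequence $\{X_{2,k}^{0}\}$; then $S_2^{\delta,2}(n)=S_2^{0,2}(n)$ for every $n$ and $\delta$ (the shift cancels in the centered sums), so the only $\delta$-dependence in the definition of $\kappa(\delta)$ sits in the threshold $\delta^2$ and in the lower truncation $\delta^{-1}$. First I would check that $\kappa(\delta)<\infty$ almost surely for each fixed $\delta$ and that $\kappa(\delta)\to\infty$ almost surely as $\delta\to 0$: by the strong law of large numbers $S_1^2(n)+S_2^{0,2}(n)\to\sigma_1^2+\sigma_2^2$ almost surely, so $z_{\alpha}^2(S_1^2(n)+S_2^{0,2}(n))/n\to 0$, which forces the stopping criterion to be met for every fixed $\delta$; and $\kappa(\delta)\geq\delta^{-1}\to\infty$.

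The key quantitative step is to show that $\delta^2\kappa(\delta)\to z_{\alpha}^2(\sigma_1^2+\sigma_2^2)$ almost surely as $\delta\to 0$. This is a Chow--Robbins type argument. For $\delta$ small enough that $\kappa(\delta)>\delta^{-1}$, minimality of $\kappa(\delta)$ yields the sandwich
$$z_{\alpha}^2\,\frac{S_1^2(\kappa(\delta))+S_2^{0,2}(\kappa(\delta))}{\kappa(\delta)}<\delta^2\leq z_{\alpha}^2\,\frac{S_1^2(\kappa(\delta)-1)+S_2^{0,2}(\kappa(\delta)-1)}{\kappa(\delta)-1}.$$
Multiplying through by $\kappa(\delta)/\delta^2$, then using $\kappa(\delta)\to\infty$ together with $S_i^2(\kappa(\delta)),S_i^2(\kappa(\delta)-1)\to\sigma_i^2$ almost surely and $\kappa(\delta)/(\kappa(\delta)-1)\to 1$, squeezes $\delta^2\kappa(\delta)$ to the claimed limit.

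Finally I would assemble the PIS. Setting $W_k:=X_{1,k}-X_{2,k}^{\delta}-\delta$, which are i.i.d.\ with mean $0$ and variance $\sigma_1^2+\sigma_2^2$, we have
$$P\left(\bar X_1(\kappa(\delta))<\bar X_2^{\delta}(\kappa(\delta))\right)=P\left(\frac{1}{\sqrt{\kappa(\delta)}}\sum_{k=1}^{\kappa(\delta)}W_k+\delta\sqrt{\kappa(\delta)}<0\right).$$
Since $\kappa(\delta)\,\delta^2/\big(z_{\alpha}^2(\sigma_1^2+\sigma_2^2)\big)\to 1$ in probability and the $W_k$ are i.i.d., Anscombe's theorem gives $\kappa(\delta)^{-1/2}\sum_{k=1}^{\kappa(\delta)}W_k\Rightarrow N(0,\sigma_1^2+\sigma_2^2)$, while $\delta\sqrt{\kappa(\delta)}=\sqrt{\delta^2\kappa(\delta)}\to z_{\alpha}\sqrt{\sigma_1^2+\sigma_2^2}$ almost surely. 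By Slutsky's lemma the sum converges in distribution to $N\big(z_{\alpha}\sqrt{\sigma_1^2+\sigma_2^2},\,\sigma_1^2+\sigma_2^2\big)$, whose distribution function is continuous at $0$, so the probability above tends to $P(N(0,1)<-z_{\alpha})=\alpha$.

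The main obstacle I anticipate is the interchange of the random stopping index with the central limit scaling: since $\kappa(\delta)$ is built from the same data as the summands $W_k$, it is correlated with them, so a plain central limit theorem does not suffice and one must invoke Anscombe's theorem (whose uniform-continuity-in-probability hypothesis is automatic for i.i.d.\ summands). The coupling and the almost sure limit $\delta^2\kappa(\delta)\to z_{\alpha}^2(\sigma_1^2+\sigma_2^2)$ are precisely what make that step rigorous; the remainder is Slutsky's lemma and bookkeeping.
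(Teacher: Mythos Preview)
Your argument is correct and proceeds along the classical Chow--Robbins/Anscombe route, which is genuinely different from the paper's proof. The paper obtains $\delta^2\kappa(\delta)\Rightarrow z_\alpha^2(\sigma_1^2+\sigma_2^2)$ by showing that the process $t\mapsto z_\alpha^2\bigl(S_1^2((t+\delta)/\delta^2)+S_2^{0,2}((t+\delta)/\delta^2)\bigr)-(t+\delta)$ converges in $D[0,\infty)$ to the deterministic line $z_\alpha^2(\sigma_1^2+\sigma_2^2)-t$ and then applying continuity of the first-passage-time map at this monotone limit; for the PIS it invokes a functional CLT $\frac{t}{\delta}\bigl(\bar X_1(t/\delta^2)-\bar X_2^{0}(t/\delta^2)\bigr)\Rightarrow\sqrt{\sigma_1^2+\sigma_2^2}\,B(t)$ in $D[0,\infty)$ together with a random time-change/convergence-together argument, landing on $P\bigl(\sqrt{\sigma_1^2+\sigma_2^2}\,B(z_\alpha^2(\sigma_1^2+\sigma_2^2))+z_\alpha^2(\sigma_1^2+\sigma_2^2)<0\bigr)=\alpha$. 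Your sandwich argument yields the stronger almost-sure limit for $\delta^2\kappa(\delta)$ and, together with Anscombe's theorem for i.i.d.\ summands plus Slutsky, dispenses with the Skorohod-space machinery entirely; this is more elementary and self-contained for the present i.i.d.\ setting. The paper's functional approach, by contrast, is better positioned for extensions---e.g., to weakly dependent samples or to more intricate stopping rules---where process-level convergence and continuous-mapping arguments are the natural tools.
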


\begin{proof}[Proof of Theorem \ref{th:clt_seq}]
For general $t \in \mathbb{R}^+$, define $\bar X_1(t):=\bar X_1(\lfloor t\rfloor)$, $\bar X_2^{\delta}(t):=\bar X_2^{\delta}(\lfloor t\rfloor)$,
$S_1^2(t):=S_1^2(\lfloor t\rfloor)$ and $S_2^{\delta,2}(t):=S_2^{\delta,2}(\lfloor t\rfloor)$.
We first notice that
\begin{eqnarray*} \label{eq:first}
\delta^2 \kappa(\delta) &=& \delta^2\inf\left\{n \geq \delta^{-1}: z_{\alpha}^2\frac{S_1^2(n)+S_2^{\delta,2}(n)}{n}<\delta^2\right\}  \\
&=&\inf\left\{t\geq \delta: z_{\alpha}^2\frac{S_1^2(t/\delta^2)+S_2^{\delta,2}(t/\delta^2)}{t}<1\right\} \\
&\,{\buildrel d \over =}\,&\inf\left\{t \geq \delta: z_{\alpha}^2\frac{S_1^2(t/\delta^2)+S_2^{0,2}(t/\delta^2)}{t}<1\right\}  \\
&=& \inf\left\{t \geq 0: z_{\alpha}^2\left(S_1^2((t+\delta)/\delta^2)+S_2^{0,2}((t+\delta)/\delta^2)\right)-(t+\delta)<0\right\}
\end{eqnarray*}
As $S_1^2((t+\delta)/\delta^2) \Rightarrow \sigma_1^2 I$ and $S_2^{0,2}((t+\delta)/\delta^2) \Rightarrow \sigma_2^2 I$ in $D[0,\infty)$ as $\delta \rightarrow 0$, where $I(t)=1$,
$$z_{\alpha}^2\left(S_1^2((t+\delta)/\delta^2)+S_2^{0,2}((t+\delta)/\delta^2)\right)-(t+\delta) \Rightarrow z_{\alpha}^2(\sigma_1^2+\sigma_2^2)-t \mbox{ in $D[0,\infty)$ as $\delta \rightarrow 0$}.$$
As $z_{\alpha}^2(\sigma_1^2+\sigma_2^2)-t$ is continuous and monotonically decreasing in $t$ \cite{Whitt:2002},
$$\delta^2 \kappa(\delta) \Rightarrow \inf\left\{t\geq 0: z_{\alpha}^2(\sigma_1^2+\sigma_2^2)-t<0\right\}=z_{\alpha}^2(\sigma_1^2+\sigma_2^2).$$
We next notice that 
\begin{eqnarray*}
&&P\left(\bar X_1(\kappa(\delta))<\bar X_2^{\delta}(\kappa(\delta))\right)\\
&=& P\left(\delta\kappa(\delta)(\bar X_1\left(\kappa(\delta)\right)-\bar X_2^{\delta}\left(\kappa(\delta)\right)-\delta)+\delta^2\kappa(\delta)<0\right)\\
&=& P\left(\delta\kappa(\delta)(\bar X_1\left(\kappa(\delta)\right)-\bar X_2^{0}\left(\kappa(\delta)\right))+\delta^2\kappa(\delta)<0\right)
\end{eqnarray*}
As
$\frac{t}{\delta}(\bar X_1(t/\delta^2)-\bar X_2^{0}(t/\delta^2))\Rightarrow \sqrt{\sigma_2^2+\sigma_2^2}B(t) \mbox{ in $D[0,\infty)$ as $\delta \rightarrow 0$}$,
using standard random time change and convergence together argument, we have
$$(\delta\kappa(\delta) t) (\bar X_1\left(\kappa(\delta) t\right)-\bar X_2^{0}\left(\kappa(\delta) t\right))+\delta^2\kappa(\delta)t \Rightarrow \sqrt{\sigma_1^2+\sigma_2^2}B\left(z_{\alpha}^2(\sigma_1^2+\sigma_2^2)t\right)+z_{\alpha}^2(\sigma_1^2+\sigma_2^2)t$$
in $D[0,\infty)$ as $\delta \rightarrow 0$.
Thus,
\begin{eqnarray*}
&&P\left(\delta\kappa(\delta)(\bar X_1\left(\kappa(\delta)\right)-\bar X_2^{0}\left(\kappa(\delta)\right))+\delta^2\kappa(\delta)<0\right)\\
&\rightarrow& P\left(\sqrt{\sigma_1^2+\sigma_2^2}B(z_{\alpha}^2(\sigma_1^2+\sigma_2^2))+z_{\alpha}^2(\sigma_1^2+\sigma_2^2)<0\right) \mbox{ as $\delta \rightarrow 0$}\\
&=& P(B(1) \leq -z_{\alpha})=\alpha.
\end{eqnarray*}
\end{proof}

\begin{remark}
Alternatively, we can also set
$$\kappa_i^{in}(\delta):=\inf\left\{n_i \geq \lfloor \delta^{-1} \rfloor: 2z_{\alpha}^2\frac{S_i^2(n_i)}{n_i}<\delta^2\right\},
~~~\mbox{ for $i=1,2$.}$$
Then we can show that
$\lim_{\delta\rightarrow 0} P\left(\bar X_1(\kappa_1^{in}(\delta))<\bar X_2^{\delta}(\kappa_2^{in}(\delta))\right)=\alpha$.
The separation of the simulation of the two systems may become handy in parallelization.
\end{remark}

\subsection{The Large Deviation Regime}
\label{sec:ldp}
In this limiting regime, we keep $\delta$ fixed and send $\alpha$ to zero. We impose light tail assumptions on the sample distribution.
\begin{assumption} \label{ass:ldp}
There exists $\theta>0$ such that $E[\exp(\theta X_1)]<\infty$ and $E[\exp(\theta X_2^0)]<\infty$.
\end{assumption}
We next introduce a few notations. Let $\psi_1(\theta):=\log E[\exp(\theta X_1)]$ and $\psi_2(\theta):=\log E[\exp(\theta X_2^\delta)]$, i.e. the log moment generating functions. We also write 
$I_i(a):=\sup_{\theta}\{\theta a - \psi_i(\theta)\},$
which is known as the Fenchel-Legendre transformation of $\psi_i$, for $i=1,2$.
Let $\mathcal{D}_i=\{\theta \in \mathbb{R}: \psi_i(\theta)<\infty\}$ and $\mathcal{S}_i=\{\psi_i^{\prime}(\theta): \theta \in \mathcal{D}\}$.
It is well-known that $I_i$ is strictly convex and $C^{\infty}$ for $a \in \mathcal{S}_i$.  We also make the following assumption on the sample distribution
\begin{assumption} \label{ass:int}
The interval $[\mu_1,\mu_1+\delta]\subset \mathcal{S}_1^o \cap \mathcal{S}_2^o$.
\end{assumption}

We assume that $I_i$'s are known. For fixed $p_1, p_2 >0$ with $p_1+p_2=1$. We can interpret $p_i$ as the proportion of sampling budget
allocated to system $i$, $i=1,2$.
We denote
$G(p_1,p_2)=\min_{b\in(\mu_1,\mu_1+\delta)} \{p_1I_1(b)+p_2I_2(b)\}$. %=p_1I_1(b^*(p_1,p_2))+p_2I_2(b^*(p_1,p_2))$.
Then set the sample size
$$\tilde n_i(\alpha)=\frac{\log(1/\alpha)}{G(p_1,p_2)}p_i ~~~ \mbox{ for $i=1,2$. }$$ 
We draw $\tilde n_i(\alpha)$ from system $i$, $i=1,2$, and pick the system with largest sample mean. The following theorem establishes the asymptotic
validity, in a logarithmic sense, of the procedure.
\begin{theorem} \label{th:ldp}
Under Assumption \ref{ass:ldp} \& \ref{ass:int},
$$\lim_{\alpha\rightarrow 0} \frac{\log P(\bar X_1(\tilde n_1(\alpha))<\bar X_2^{\delta}(\tilde n_2(\alpha))}{\log(\alpha)}=1.$$ 
\end{theorem}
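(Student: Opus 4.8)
The plan is to prove the equivalent statement that $\frac{1}{\log(1/\alpha)}\log\mathrm{PIS}(\alpha)\to-1$ as $\alpha\to0$, where $\mathrm{PIS}(\alpha):=P(\bar X_1(\tilde n_1(\alpha))<\bar X_2^{\delta}(\tilde n_2(\alpha)))$. Set $n:=\log(1/\alpha)$ and $G:=G(p_1,p_2)$, so $\tilde n_i(\alpha)=(p_i/G)n$, and let $b^{\star}$ be the minimizer in the definition of $G$. First I would collect the facts I need. The two sample means are independent. By strict convexity of $I_1$ and $I_2$, whose minima (value $0$) are at $\mu_1$ and at $\mu_1-\delta<\mu_1$ respectively, the point $b^{\star}$ lies strictly between $\mu_1-\delta$ and $\mu_1$; and Assumption~\ref{ass:int} puts it, together with a neighborhood, inside $\mathcal{S}_1^{o}\cap\mathcal{S}_2^{o}$, where $I_1,I_2$ are finite, continuous and $C^{\infty}$. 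By Cramér's theorem, $\bar X_1(\tilde n_1(\alpha))$ satisfies a large deviation principle at speed $n$ with rate function $(p_1/G)I_1(\cdot)$ and $\bar X_2^{\delta}(\tilde n_2(\alpha))$ at speed $n$ with rate function $(p_2/G)I_2(\cdot)$; by independence the pair satisfies an LDP at speed $n$ with rate $J(x,y):=(p_1/G)I_1(x)+(p_2/G)I_2(y)$.

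To get $\limsup_{\alpha\to0}\log\mathrm{PIS}(\alpha)/\log\alpha\le1$ I would lower bound $\mathrm{PIS}(\alpha)$. Since $\{x<y\}$ is open, the LDP lower bound yields $\liminf_n\frac1n\log\mathrm{PIS}(\alpha)\ge-\inf_{x<y}J(x,y)$; concretely, by independence $\mathrm{PIS}(\alpha)\ge P(\bar X_1(\tilde n_1(\alpha))<b^{\star})\,P(\bar X_2^{\delta}(\tilde n_2(\alpha))>b^{\star})$, and Cramér's lower bound applied to the two factors gives $\liminf_n\frac1n\log\mathrm{PIS}(\alpha)\ge-\frac1G\bigl(p_1I_1(b^{\star})+p_2I_2(b^{\star})\bigr)=-1$ by the definition of $G$.

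To get $\liminf_{\alpha\to0}\log\mathrm{PIS}(\alpha)/\log\alpha\ge1$ I would upper bound $\mathrm{PIS}(\alpha)$. The incorrect-selection event lies in the closed set $\{x\le y\}$, so the LDP upper bound gives $\limsup_n\frac1n\log\mathrm{PIS}(\alpha)\le-\inf_{x\le y}J(x,y)$, and the point is that $\inf_{x\le y}J(x,y)=1$: $J$ is convex, its unconstrained minimizer $(\mu_1,\mu_1-\delta)$ violates $x\le y$, so the constrained infimum is attained on the diagonal $x=y=b$ and equals $\frac1G\min_b\{p_1I_1(b)+p_2I_2(b)\}=1$. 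To avoid invoking the general two-dimensional LDP one can instead split $\{\bar X_1<\bar X_2^{\delta}\}$ along a fixed finite grid $\mu_1-\delta=b_0<\cdots<b_M=\mu_1$: the piece on $\{b_{j-1}\le\bar X_2^{\delta}<b_j\}$ sits inside $\{\bar X_1\le b_j\}\cap\{\bar X_2^{\delta}\ge b_{j-1}\}$ and is handled by the Chernoff bounds $P(\bar X_1(\tilde n_1)\le b)\le e^{-\tilde n_1 I_1(b)}$ for $b\le\mu_1$ and $P(\bar X_2^{\delta}(\tilde n_2)\ge b)\le e^{-\tilde n_2 I_2(b)}$ for $b\ge\mu_1-\delta$, whereas the two end pieces use $I_1(\mu_1)=I_2(\mu_1-\delta)=0$ to force their rates to be at least $n$; for a fine enough (but fixed) grid every piece is at most a constant times $\alpha^{1-\varepsilon}$, so $\mathrm{PIS}(\alpha)\le C_{\varepsilon}\alpha^{1-\varepsilon}$. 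Combining the two bounds gives $\frac1n\log\mathrm{PIS}(\alpha)\to-1$.

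The step I expect to be the main obstacle is the variational identity $\inf_{x\le y}J(x,y)=1$ (equivalently $\inf_{x<y}J(x,y)=1$): one must show that the constrained optimum lies exactly on the diagonal and evaluates to the normalization $G/G=1$, which relies on convexity of the rate functions and on the ordering $\mu_1-\delta<\mu_1$ of the two means — this is exactly why $G(p_1,p_2)$ is the correct normalizing constant in $\tilde n_i(\alpha)$. A secondary, routine point is checking that Cramér's theorem and the Chernoff bounds apply throughout $[\mu_1-\delta,\mu_1]$, i.e. that $b^{\star}$ and the grid points lie in $\mathcal{S}_1^{o}\cap\mathcal{S}_2^{o}$, which is what Assumptions~\ref{ass:ldp} and \ref{ass:int} provide; the non-integrality of $\tilde n_i(\alpha)$ and the distinction between strict and weak inequalities do not affect the logarithmic limit.
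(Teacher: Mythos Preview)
Your proposal is correct and follows essentially the same route as the paper: establish the joint large deviation principle for the pair of sample means at speed $n=\log(1/\alpha)$ with rate function $J(x,y)=(p_1/G)I_1(x)+(p_2/G)I_2(y)$ (the paper imports this from \cite{Glynn:2004}), then read off the rate of the event $\{x<y\}$ as the infimum on the diagonal, which equals $1$ by the very definition of $G(p_1,p_2)$. Your write-up is more explicit than the paper's outline---you separate the upper and lower bounds, justify the variational identity $\inf_{x\le y}J(x,y)=1$ via convexity, and offer a self-contained grid/Chernoff alternative for the upper bound---but the underlying argument is the same.
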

The proof of the theorem follows from the same line of analysis as in \cite{Glynn:2004}. We shall only provide an outline here.
Let $\tilde n(\alpha)=\log(1/\alpha)$. Then $\tilde n_i(\alpha)=\tilde n(\alpha)p_i/G(p_1,p_2)$ for $i=1,2$.
We also denote 
$Z(n)=\left(\bar X(np_1/G(p_1,p_2)),\bar X(np_2/G(p_1,p_2))\right)$. Then the rate function of $\{Z(n): n\geq 0\}$, is (Lemma 1 in \cite{Glynn:2004})
$$I(x_1,x_2)=\frac{p_1}{G(p_1,p_2)}I_1(x_1)+\frac{p_2}{G(p_1,p_2)}I_2(x_2).$$
Then we have,
\begin{eqnarray*}
&&\lim_{n\rightarrow \infty}\frac{1}{n}\log P\left(\bar X(np_1/G(p_1,p_2))<\bar X(np_2/G(p_1,p_2))\right)\\
&=&-\inf_{b\in(\mu_1,\mu_1-\delta)}\left(\frac{p_1}{G(p_1,p_2)} I_1(b)+\frac{p_2}{G(p_1,p_2)} I_2(b)\right)=-1.
\end{eqnarray*}
Thus,
$\lim_{\alpha \rightarrow \infty}\frac{1}{\tilde n(\alpha)}\log P\left(\bar X(\tilde n_1(\alpha))<\bar X(\tilde n_2(\alpha))\right)=-1.$

We next provide some special choices of $p_i$'s. i) If we want to minimize the sampling cost, 
then we pick $(p_1,p_2)$
that solves
$$\min_{p_1,p_2} (p_1+p_2)/G(p_1,p_2) ~~~\mbox{ s.t. }~ p_1+p_2=1, p_1>0, p_2>0.$$
ii) If we want to draw equal amount of samples from the two systems, then we set $p_1=p_2=1/2$. In this case
$$\tilde n_i^{e}(\alpha)=\frac{\log(1/\alpha)}{2G(1/2,1/2)} \mbox{ for $i=1,2$.}$$
iii) It is also possible to draw samples from each system without taking into account the information of the other system.
For example, we can pick any $b \in (\mu_1,\mu_1+\delta)$ and set
$$\tilde n_i^{in}(\alpha)=\frac{\log(1/\alpha)}{I_i(b)} \mbox{ for $i=1,2$.}$$
However, in this case we ``overshoot" the PIS. In particular, following the proof of Theorem \ref{th:ldp}, 
it is easy to check that
$$\lim_{\alpha\rightarrow 0} \frac{\log P(\bar X_1(\tilde n_1^{in}(\alpha))<\bar X_2^{\delta}(\tilde n_2^{in}(\alpha))}{\log(\alpha)}<1.$$ 
 
In applications, the assumption that $I_i(\cdot)$'s are known is rather restrictive. When $I_i(\cdot)$'s are not known, estimating
this function would in general be a more difficult task than estimating the means. Recently, \cite{Glynn:2015} conduct an extensive analysis of this issue.
%themselves.
 
\subsection{The Moderate Deviation Regime}
\label{sec:mdp}

In this regime, we send both $\alpha$ and $\delta$ to zero at an appropriate rate. In particular, we consider a sequence of $(\alpha_k,\delta_k)$'s, indexed by $k\in \mathbb{N}$, satisfying that $\alpha_k \rightarrow 0$ and $\delta_k \rightarrow 0$ as $k\rightarrow \infty$, 
and
$$\log(1/\alpha_k)\delta_k^{(1-2\beta)/\beta}=L,$$
for some $\beta\in(1/3,1/2)$ and $L>0$, independent of $k$.

We start by assuming that the variances are known. 
In this case, for fixed $p_1, p_2>0$ with $p_1+p_2=1$, we set 
$$\hat n_i(k)=\frac{\log(1/\alpha_k)}{\delta_k^2\hat G(p_1,p_2)}p_i, ~~~ \mbox{ for $i=1,2$}.$$
where $\hat G(p_1,p_2)=p_1p_2/(2(\sigma_1^2 p_2+\sigma_2^2p_1))$.
For $\delta_k=\delta$, $\alpha_k=\alpha$, we draw $\hat n_i(k)$ samples from system $i$, and then choose the system with the largest sample mean. The following theorem establishes the asymptotic validity, in a logarithmic sense, of the procedure.
\begin{theorem} \label{th:mdp}
Under Assumption \ref{ass:ldp},
$$\lim_{k\rightarrow \infty}\frac{\log P\left(\bar X_1(\hat n_1(k))<\bar X_2^{\delta_k}(\hat n_2(k))\right)}{\log(\alpha_k)}=1 .$$
\end{theorem}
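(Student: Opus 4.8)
The plan is to recentre the two systems, collapse the probability of incorrect selection into a single moderate-deviation event, and then invoke the moderate deviation principle for i.i.d.\ sums through the G\"artner--Ellis theorem. Set
$$W(k):=\big(\bar X_1(\hat n_1(k))-\mu_1\big)-\big(\bar X_2^{\delta_k}(\hat n_2(k))-\mu_2^{\delta_k}\big),$$
so that, since $\mu_2^{\delta_k}=\mu_1-\delta_k$,
$$P\big(\bar X_1(\hat n_1(k))<\bar X_2^{\delta_k}(\hat n_2(k))\big)=P\big(W(k)<-\delta_k\big)=P\big(W(k)/\delta_k<-1\big).$$
Because $X_{2,j}^{\delta_k}-\mu_2^{\delta_k}\,{\buildrel d \over =}\,X_{2,j}^{0}-\mu_1$, the centred summands of both systems have $k$-independent laws, so $W(k)$ is the difference of two independent centred sample means; plugging in $\hat n_i(k)=\log(1/\alpha_k)\,p_i/(\delta_k^2\hat G(p_1,p_2))$ and using the definition of $\hat G$ gives the identity
$$\mathrm{Var}\!\big(W(k)/\delta_k\big)=\frac{\hat G(p_1,p_2)}{\log(1/\alpha_k)}\Big(\frac{\sigma_1^2}{p_1}+\frac{\sigma_2^2}{p_2}\Big)=\frac{1}{2\log(1/\alpha_k)},$$
which confirms that $W(k)/\delta_k$ is a normalized sum living exactly in the moderate-deviation window.

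The core step is to show that, as $k\to\infty$, $\{W(k)/\delta_k\}$ obeys a large deviation principle at speed $b_k:=\log(1/\alpha_k)$ with good rate function $x\mapsto x^2$. Writing $\psi_i^c$ for the log moment generating function of the centred summand of system $i$ (finite in a neighbourhood of the origin under Assumption \ref{ass:ldp}), independence gives
$$\frac{1}{b_k}\log E\Big[\exp\Big(\lambda b_k\,\frac{W(k)}{\delta_k}\Big)\Big]=\frac{\hat n_1(k)}{b_k}\,\psi_1^c\!\Big(\frac{\lambda b_k}{\delta_k\hat n_1(k)}\Big)+\frac{\hat n_2(k)}{b_k}\,\psi_2^c\!\Big(-\frac{\lambda b_k}{\delta_k\hat n_2(k)}\Big).$$
Here $b_k/(\delta_k\hat n_i(k))=\delta_k\hat G(p_1,p_2)/p_i\to0$, so both arguments vanish; the second-order expansion $\psi_i^c(\theta)=\tfrac12\sigma_i^2\theta^2+o(\theta^2)$ applies, and since $\hat n_i(k)/b_k=\Theta(\delta_k^{-2})$ the $o(\theta^2)$ remainder contributes only $o(1)$, while the leading terms sum to $\hat G(p_1,p_2)\big(\sigma_1^2/p_1+\sigma_2^2/p_2\big)\lambda^2/2=\lambda^2/4$ for every $\lambda\in\mathbb{R}$. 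Since this limit is finite, smooth and steep, the G\"artner--Ellis theorem yields the stated LDP with rate $\Lambda^*(x)=\sup_\lambda\{\lambda x-\lambda^2/4\}=x^2$. (Equivalently, one may apply the i.i.d.\ moderate deviation principle of \cite{Dembo:1998} to each sample mean and combine the two rate functions as in Lemma~1 of \cite{Glynn:2004}; the hypothesis $\log(1/\alpha_k)\delta_k^{(1-2\beta)/\beta}=L$ with $\beta\in(1/3,1/2)$ is precisely what puts $\hat n_i(k)\asymp\delta_k^{-1/\beta}$ and $\log(1/\alpha_k)\asymp\hat n_i(k)^{1-2\beta}$ in the admissible moderate-deviation range.)

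Finally, I would feed this LDP into the half-line: the open set $(-\infty,-1)$ and its closure $(-\infty,-1]$ share the infimum $\inf_{x\le -1}x^2=1$ of the rate function, so
$$\lim_{k\to\infty}\frac{1}{\log(1/\alpha_k)}\log P\big(W(k)/\delta_k<-1\big)=-1,$$
whence $\log P\big(\bar X_1(\hat n_1(k))<\bar X_2^{\delta_k}(\hat n_2(k))\big)=(1+o(1))\log\alpha_k$, which is the assertion. The main obstacle is the core step: one must check carefully that the moment generating functions remain finite along the vanishing arguments $\pm\lambda\delta_k\hat G(p_1,p_2)/p_i$ (here the exponential moment near the origin in Assumption \ref{ass:ldp} is used), that the moderate-deviation scaling of $(\alpha_k,\delta_k)$ is exactly what is needed, and that the Taylor remainder is genuinely negligible after multiplication by $\hat n_i(k)$, so that the G\"artner--Ellis limit really is $\lambda^2/4$ for all real $\lambda$; the remaining reductions — recentring, the variance identity, and rounding $\hat n_i(k)$ to an integer — are routine.
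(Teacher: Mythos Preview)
Your argument is correct and is essentially the same as the paper's: both recentre, apply the G\"artner--Ellis theorem to obtain a quadratic (Gaussian) rate function from the second-order expansion of the log moment generating functions, and then read off the infimum over the relevant half-space. The only cosmetic difference is packaging: the paper keeps the two coordinates separate, works with the vector $Z_n=(n^{\beta}(\bar X_1(nq_1)-\mu_1),\,n^{\beta}(\bar X_2^0(nq_2)-\mu_1))$ at speed $n^{1-2\beta}$ (indexed through $\hat n(k)=\log(1/\alpha_k)\delta_k^{-2}$), derives the bivariate rate $\hat I(x_1,x_2)=q_1x_1^2/(2\sigma_1^2)+q_2x_2^2/(2\sigma_2^2)$, and minimizes over $\{x_1-x_2<-L^{\beta}\}$ to get $L^{2\beta}$, which matches $\log(1/\alpha_k)$ after using $L^{2\beta}\hat n(k)^{1-2\beta}=\log(1/\alpha_k)$; you instead collapse everything to the scalar $W(k)/\delta_k$ and take speed $b_k=\log(1/\alpha_k)$ directly, which bypasses the explicit $\beta$ bookkeeping and yields the same answer via $\Lambda^*(x)=x^2$.
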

\begin{proof}[Proof of Theorem \ref{th:mdp}]
Let 
$\hat n(k)=\log(1/\alpha_k)\delta_k^{-2}$, $q_i=p_i/\hat G(p_1,p_2)$. %\hat G(p_1,p_2)^{-1}$.
\begin{eqnarray*}
&&P\left(\bar X_1(\hat n_1(k))<\bar X_2^{\delta_k}(\hat n_2(k))\right)\\
&=&P\left((\bar X_1(\hat n(k)q_1)-\mu_1)-(\bar X_2^{0}(\hat n(k)q_2)-\mu_1)<-\delta_k\right)\\
&=& P\left((\bar X_1(\hat n(k)q_1)-\mu_1)-(\bar X_2^{0}(\hat n(k)q_2)-\mu_1)<-L^{\beta} \hat n(k)^{-\beta}\right)\\
&=& P\left(\hat n(k)^{\beta}(\bar X_1(\hat n(k)q_1)-\mu_1)-\hat n(k)^{\beta}(\bar X_2^{0}(\hat n(k)q_2)-\mu_1)<-L^{\beta}\right).
\end{eqnarray*}
Let $Z_n=(n^{\beta}(\bar X_1(nq_1)-\mu_1), n^{\beta}(\bar X_2^0(nq_2)-\mu_1))$, then
$$\lim_{n\rightarrow\infty}\frac{1}{n^{1-2\beta}}E\left[\exp(n^{1-2\beta}\theta Z_n)\right]
=q_1\frac{1}{2}\left(\frac{\theta_1}{q_1}\right)^2\sigma_1^2+q_2\frac{1}{2}\left(\frac{\theta_2}{q_2}\right)^2\sigma_2^2$$
By Gartner-Ellis theorem, $Z_n$ satisfies a LDP with rate $n^{1-2\beta}$ and rate function
$$\hat I(x_1,x_2)=q_1x_1^2/(2\sigma_1^2)+q_2x_2^2/(2\sigma_2^2).$$
In particular,
$$\lim_{n\rightarrow\infty}\frac{1}{n^{1-2\beta}}\log P(Z_n(1)-Z_n(2)<-L^{\beta})=-\inf_{x_1,x_2, x_1-x_2<-L^{\beta}}\hat I(x_1,x_2)=-L^{2\beta}.$$
As $L^{2\beta}\hat n(k)^{1-2\beta}=\log(1/\alpha_k)$, we have
$$\lim_{k\rightarrow\infty}\frac{1}{\log(\alpha_k)}\log P\left(\hat n(k)^{\beta}(\bar X_1(\hat n(k)q_1)-\mu_1)-\hat n(k)^{\beta}(\bar X_2^{0}(\hat n(k)q_2)-\mu_1)<-L^{\beta}\right)=1.$$
\end{proof}

We next provide some special choices of $p_i$'s. i) If we want to minimize the total sampling cost $\hat n_1(k)+\hat n_2(k)$ for each $k$, 
then we pick $p_i=\sigma_i/(\sigma_1+\sigma_2)$ for $i=1,2$. In this case
$$\hat n_i^*(k)=\frac{\log(1/\alpha_k)(\sigma_1+\sigma_2)\sigma_i}{\delta_k^2} ~~~\mbox{ for $i=1,2$.}$$
ii) When $p_1=p_2=1/2$, we draw equal amount of samples from the two systems. In this case
$$\hat n_1^e(k)=\hat n_2^e(k)=\frac{2\log(1/\alpha_k)(\sigma_1^2+\sigma_2^2)}{\delta_k^2}$$
iii) It is also possible to draw samples from each system without taking into account the information of the other system. 
In particular, when $p_i=\sigma_i^2/(\sigma_1^2+\sigma_2^2)$, 
$$\hat n_i^{in}(k)=\frac{4\log(1/\alpha_k) \sigma_i^2}{\delta_k^2} ~~~\mbox{for $i=1,2$.}$$

When the variances are not known a priori, we introduce a sequential stopping procedure. In this paper, we shall focus on the case of equal sample sizes only. 
We also impose the following assumption on sample distribution (mainly for technical reasons).

\begin{assumption} \label{ass:mdp}
There exist $\theta>0$ such that $E[\exp(\theta (X_1-\mu_1)^2)]<\infty$ and $E[\exp(\theta (X_2^0-\mu_1)^2)]<\infty$.
\end{assumption}

We define the stopping time
$$N_k:=\inf\left\{n \geq \delta_k^{-1}: 2\frac{S_1^2(n)+S_2^{\delta_k,2}(n)}{n} < \delta_{k}^2\log(1/\alpha_k)^{-1}\right\}.$$
We keep sampling the two systems until the total sample variance over the sample size is smaller than 
$\delta_k^2/(2\log(1/\alpha_k))$, and then we pick the system with the largest sample mean.
The following theorem establishes the asymptotic validity of the sequential stopping procedure. 

\begin{theorem} \label{th:mdp_seq}
Under Assumption \ref{ass:mdp},
$$\lim_{k\rightarrow \infty}\frac{\log P\left(\bar X_1(N_k)<\bar X_2^{\delta_k}(N_k)\right)}{\log(\alpha_k)}=1 .$$
\end{theorem}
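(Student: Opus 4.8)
The plan is to reduce this random-sample-size procedure to the deterministic-sample-size procedure of Theorem~\ref{th:mdp} with the equal allocation $p_1=p_2=1/2$ (note that Assumption~\ref{ass:mdp} implies Assumption~\ref{ass:ldp}, so that theorem applies). With $p_1=p_2=1/2$ one has $\hat G(1/2,1/2)=1/\big(4(\sigma_1^2+\sigma_2^2)\big)$, so the associated deterministic size is $m_k:=\hat n_1^e(k)=2\log(1/\alpha_k)(\sigma_1^2+\sigma_2^2)/\delta_k^2$, and the proof of Theorem~\ref{th:mdp} yields both $\lim_k\log P(\bar X_1(m_k)<\bar X_2^{\delta_k}(m_k))/\log\alpha_k=1$ and the moderate-deviation lower bound $P\big(\bar X_1(m_k)-\bar X_2^0(m_k)<-(1+o(1))\delta_k\big)\ge\alpha_k^{1+o(1)}$. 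First I would fix the natural coupling $X_{2,j}^{\delta_k}=X_{2,j}^0-\delta_k$; then $S_2^{\delta_k,2}(n)=S_2^{0,2}(n)$, so $N_k$ is measurable with respect to the two empirical-variance processes only, and incorrect selection becomes the event $\{(\bar X_1(N_k)-\bar X_2^0(N_k))+\delta_k<0\}$ on the single probability space carrying $(X_{1,j})_j$ and $(X_{2,j}^0)_j$. Writing $r:=(1-2\beta)/\beta$, the hypothesis gives $\log(1/\alpha_k)=L\delta_k^{-r}$, and $\beta\in(1/3,1/2)$ is exactly $r\in(0,1)$; consequently $m_k\asymp\delta_k^{-(2+r)}$ dominates both the truncation scale $\delta_k^{-1}$ (so the constraint $n\ge\delta_k^{-1}$ is eventually inactive) and $\delta_k^{-r}\asymp\log(1/\alpha_k)$.

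The first substantive step is to show that $N_k$ concentrates around $m_k$ at a rate super-exponential relative to $\alpha_k$. For a sequence $\varepsilon_k\downarrow0$ to be chosen, set $B_k:=\{(1-\varepsilon_k)m_k\le N_k\le(1+\varepsilon_k)m_k\}$. Because the stopping threshold at sample size $n$ equals $n\delta_k^2/\big(2\log(1/\alpha_k)\big)=(n/m_k)(\sigma_1^2+\sigma_2^2)$, the event $\{N_k>(1+\varepsilon_k)m_k\}$ forces $S_1^2(m)+S_2^{0,2}(m)\ge(1+\varepsilon_k)(\sigma_1^2+\sigma_2^2)$ at the single index $m=\lceil(1+\varepsilon_k)m_k\rceil$, while $\{N_k<(1-\varepsilon_k)m_k\}$ forces $S_1^2(n)+S_2^{0,2}(n)<(n/m_k)(\sigma_1^2+\sigma_2^2)$ for some $n\in[\delta_k^{-1},(1-\varepsilon_k)m_k)$. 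Under Assumption~\ref{ass:mdp} the empirical variances obey Cram\'er-type upper bounds (the dominant contribution being the empirical mean of $(X_{i,j}-\mu_1)^2$, which has finite exponential moments, up to a negligible $(\bar X_i-\mu_1)^2$ correction), so $P(N_k>(1+\varepsilon_k)m_k)\le\exp(-c\varepsilon_k^2 m_k)$ and $P(N_k<(1-\varepsilon_k)m_k)\le m_k\exp(-c\delta_k^{-1})$ for some $c>0$; taking $\varepsilon_k:=\delta_k^{(2+r)/4}$ makes $\varepsilon_k^2 m_k\asymp\delta_k^{-(1+r/2)}$, and since $1+r/2>r$ and $1>r$ one obtains $P(B_k^c)=o(\alpha_k^M)$ for every $M>0$.

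The upper bound on the PIS ($\liminf_k\log P/\log\alpha_k\ge1$) then follows by writing $P(\bar X_1(N_k)<\bar X_2^{\delta_k}(N_k))\le P(B_k^c)+\sum_{n=\lceil(1-\varepsilon_k)m_k\rceil}^{\lceil(1+\varepsilon_k)m_k\rceil}P\big(\bar X_1(n)-\bar X_2^0(n)<-\delta_k\big)$ --- using that on $B_k\cap\{N_k=n\}$ incorrect selection is contained in $\{\bar X_1(n)-\bar X_2^0(n)<-\delta_k\}$ --- together with the Chernoff bound $P\big(\bar X_1(n)-\bar X_2^0(n)<-\delta_k\big)\le\exp\!\big(-(1-o(1))\,n\delta_k^2/(2(\sigma_1^2+\sigma_2^2))\big)$, uniform in $n$, obtained by tilting the i.i.d.\ variables $X_{1,j}-X_{2,j}^0$ by $\delta_k/(\sigma_1^2+\sigma_2^2)\downarrow0$: since $m_k\delta_k^2/(2(\sigma_1^2+\sigma_2^2))=\log(1/\alpha_k)$, each of the polynomially many summands is $\alpha_k^{1-o(1)}$ while $P(B_k^c)$ is negligible. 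For the lower bound ($\limsup_k\log P/\log\alpha_k\le1$), choose $\gamma_k\downarrow0$ with $\gamma_k/\sqrt{\varepsilon_k}\to\infty$ and set
$$C_k:=\Big\{\bar X_1(m_k)-\bar X_2^0(m_k)<-(1+\gamma_k)\delta_k\Big\}\;\cap\;\bigcap_{i=1,2}\Big\{\sup_{(1-\varepsilon_k)m_k\le n\le(1+\varepsilon_k)m_k}\big|\bar X_i(n)-\bar X_i(m_k)\big|<\tfrac{\gamma_k}{2}\,\delta_k\Big\}.$$
On $C_k\cap B_k$, taking $n=N_k$ gives $\bar X_1(N_k)-\bar X_2^0(N_k)<-(1+\gamma_k)\delta_k+\gamma_k\delta_k=-\delta_k$, hence $P(\mathrm{PIS})\ge P(C_k)-P(B_k^c)$. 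The first event in $C_k$ has probability at least $\alpha_k^{(1+o(1))(1+\gamma_k)^2}=\alpha_k^{1+o(1)}$ by the moderate-deviation lower bound in the proof of Theorem~\ref{th:mdp}; the complements of the oscillation events are bounded by a maximal inequality plus a moderate-deviation estimate for increments over a window of length $\varepsilon_k m_k$, the relevant deviation being of order $(\gamma_k/\sqrt{\varepsilon_k})\,\delta_k\sqrt{m_k}\asymp(\gamma_k/\sqrt{\varepsilon_k})\,\delta_k^{-r/2}$ in standard-deviation units, so each has probability $\exp\!\big(-c(\gamma_k^2/\varepsilon_k)\log(1/\alpha_k)\big)=o(\alpha_k^M)$ for every $M$. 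Thus $P(\mathrm{PIS})\ge\alpha_k^{1+o(1)}$, which with the upper bound proves the theorem.

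The step I expect to be the main obstacle is the joint calibration of $\varepsilon_k$ and $\gamma_k$ so that, simultaneously, (i) $N_k\in[(1-\varepsilon_k)m_k,(1+\varepsilon_k)m_k]$ off an event of probability $o(\alpha_k)$, (ii) the oscillation of $\bar X_1-\bar X_2^0$ over that window stays below $o(\delta_k)$ off an event of probability $o(\alpha_k)$, and (iii) the moderate-deviation cost at the center $m_k$ is still only $\alpha_k^{1+o(1)}$. This balance is feasible precisely because $\log(1/\alpha_k)=L\delta_k^{-r}$ with $r=(1-2\beta)/\beta\in(0,1)$ places $m_k\asymp\delta_k^{-(2+r)}$ strictly between the competing polynomial scales $\delta_k^{-1}$ and $\delta_k^{-r}$. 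The remaining work --- promoting the fixed-$n$ Cram\'er and moderate-deviation estimates to versions uniform in $n$ over the relevant range, and the maximal inequality on the increments --- is routine and parallels the arguments in the proofs of Theorems~\ref{th:clt_seq} and~\ref{th:mdp}.
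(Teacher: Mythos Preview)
Your proof follows essentially the same architecture as the paper's: (i) use Assumption~\ref{ass:mdp} to show that $N_k$ concentrates around the deterministic target $m_k=N_k^*=2\log(1/\alpha_k)(\sigma_1^2+\sigma_2^2)/\delta_k^2$ with error probability decaying faster than any power of $\alpha_k$, and (ii) on that good event reduce the PIS to the fixed-sample-size moderate-deviation asymptotics from Theorem~\ref{th:mdp}. Two technical differences are worth flagging.

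First, the paper works with a \emph{fixed} tolerance $\epsilon>0$ rather than a vanishing sequence $\varepsilon_k$: it shows $P(N_k\notin B_\epsilon(k))\le\exp(-c\,\delta_k^{-1})$, obtains upper and lower bounds on the PIS of the form $\alpha_k^{(1\mp c\epsilon)(1+o(1))}$, divides by $\log\alpha_k$, lets $k\to\infty$, and only \emph{afterwards} sends $\epsilon\downarrow0$. This avoids the joint calibration of $\varepsilon_k$ and $\gamma_k$ that you identify as the main obstacle; for a purely log-asymptotic statement the fixed-$\epsilon$ route is simpler and suffices.

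Second, for the lower bound the paper passes from $P(\mathrm{PIS}\mid N_k\in B_\epsilon(k))$ to $\inf_{n\in B_\epsilon(k)}P(\bar X_1(n)<\bar X_2^{\delta_k}(n))$ by invoking independence of $S_i^2(n)$ and $\bar X_i(n)$---a property that in fact holds only for Gaussian samples, so the paper's argument at this step is not valid in the stated generality. Your oscillation-event construction $C_k$, combined with a maximal inequality over the window $[(1-\varepsilon_k)m_k,(1+\varepsilon_k)m_k]$, sidesteps this issue entirely and yields a rigorous lower bound without any independence hypothesis. So while your upper bound could be streamlined along the paper's lines (fixed $\epsilon$, union bound over $n$), your lower-bound treatment is genuinely more robust than what appears in the paper.
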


\begin{remark}
Alternatively, we can also set
$$N_i^{in}(k):=\inf\left\{n_i \geq \delta_k^{-1}: \frac{4S_i^2(n_i)}{n_i}<\delta_{k}^2\log(1/\alpha_k)^{-1}\right\},
~~~\mbox{ for $i=1,2$.}$$
Then following the same line of analysis as in the Proof of Theorem \ref{th:mdp_seq}, we have
$$\lim_{k\rightarrow \infty} \frac{1}{\log(\alpha_k)}P\left(\bar X_1(N_1^{in}(k))<\bar X_2^{\delta_k}(N_2^{in}(k))\right)=1.$$
The separation of the simulation of the two systems may become handy in parallelization.
\end{remark}

\subsubsection{Proof of Theorem \ref{th:mdp_seq}}
We first notice that as $\log(1/\alpha_k)\delta_k^{(1-2\beta)/\beta}=L$,
$$N_k=\inf\left\{n \geq \delta_k^{-1}: 2L\frac{S_1^2(n)+S_2^{\delta_k,2}(n)}{n} \leq \delta_k^{1/\beta}\right\}.$$
As the distribution of $S_2^{\delta_k,2}(n)$ does not depend on $\delta_k$. We shall drop the superscription $\delta_k$
when there is no confusion. Let $S_i^2(t):=S_i^2(\lfloor t \rfloor)$ for $t\geq 0$.
As $S_i^2(n) \Rightarrow \sigma_i^2$ as $n \rightarrow \infty$, by Continuous Mapping Theorem 
$$\delta_k^{1/\beta}N_k=\inf\left\{t \geq \delta^{1/\beta-1}: 2L\frac{S_1^2(t\delta_k^{-1/\beta})+S_2^{2}(t\delta_k^{-1/\beta})}{t}<1\right\}
\Rightarrow 2L(\sigma_1^2+\sigma_2^2).$$
We next establish an upper bound for 
$P\left(|\delta_k^{1/\beta}N_k -2L(\sigma_1^2+\sigma_2^2)|>\epsilon\right)$ for any $\epsilon>0$ small enough.
\begin{eqnarray*}
&&\frac{1}{n}\log E\left[\exp(n\theta S_i^2(n)\right]\\
&=&\log E\left[\exp(\theta(X_i-\mu_i)^2)\right]-\frac{1}{n}\log E\left[\exp(-\theta n(\bar X_i(n)-\mu_i)^2)\right]\\
&\rightarrow& \log E\left[\exp(\theta(X_i-\mu_i)^2)\right] ~~~\mbox{ as $n\rightarrow \infty$,}
\end{eqnarray*}
for $i=1,2$. Thus,
$$\lim_{n\rightarrow \infty}\frac{1}{n}\log E\left[\exp(n\theta (S_1^2(n)+S_2^2(n)))\right] = \tilde\psi_1(\theta)+ \tilde\psi_2(\theta),$$
where $\tilde\psi_i(\theta)=\log E[\exp(\theta (X_i-\mu_i)^2)]$.
By Gartner-Ellis Theorem, $S_1^2(n)+S_2^2(n)$ satisfies a LDP with rate function
$I(a)=\sup\left\{\theta a - \left(\tilde\psi_1(\theta)+ \tilde\psi_2(\theta)\right)\right\}$.
Then we have
\begin{eqnarray*}
%&&P\left(\delta_k^{1/\beta}N_k -2L(\sigma_1^2+\sigma_2^2)>\epsilon\right)\\
&&P\left(\delta_k^{1/\beta}N_k>2L(\sigma_1^2+\sigma_2^2)+\epsilon\right)\\
&\leq& P\left(S_1^2\left((2L(\sigma_1^2+\sigma_2^2)+\epsilon)\delta^{-1/\beta}\right)+S_2^2\left((2L(\sigma_1^2+\sigma_2^2)+\epsilon)\delta^{-1/\beta}\right)>\sigma_1^2+\sigma_2^2+\epsilon\right)\\
&\leq& \exp(-(2L(\sigma_1^2+\sigma_2^2)+\epsilon)\delta_k^{-1/\beta}I(\epsilon)+o(\delta_k^{-1/\beta})),
\end{eqnarray*} 
and
\begin{eqnarray*}
%&&P\left(\delta_k^{1/\beta}N_k -2L(\sigma_1^2+\sigma_2^2)<-\epsilon\right)\\
&&P\left(\delta_k^{1/\beta}N_k<2L(\sigma_1^2+\sigma_2^2)-\epsilon\right)\\
&=&P\left( \exists t \in(\delta_k^{1/\beta-1},  2L(\sigma_1^2+\sigma_2^2)-\epsilon) \mbox{ s.t. } 2L\left(S_1^2\left(t\delta_k^{-1/\beta}\right)+S_2^2\left(t\delta_k^{-1/\beta}\right)\right)<t\right)\\
&\leq& \sum_{n=\lfloor \delta_k^{-1} \rfloor}^{\lfloor (2L(\sigma_1^2+\sigma_2^2)-\epsilon)\delta_k^{-1/\beta}\rfloor}P\left(S_1^2\left(n\right)+S_2^2\left(n\right)<(\sigma_1^2+\sigma_2^2)-\epsilon\right)\\
&\leq& \left(2L(\sigma_1^2+\sigma_2^2)-\epsilon)\delta_k^{-1/\beta}\right)\exp(-\delta_k^{-1}I(\epsilon)+o(\delta_k^{-1}))
\end{eqnarray*} 
We denote $B_{\epsilon}(k):=\{t: |\delta_k^{1/\beta} t - 2L(\sigma_1^2+\sigma_2^2)|<\epsilon\}$ for any $\epsilon>0$. 
Then $P(N_k \in B_{\epsilon}(k)) \leq \exp(-\delta_k^{-1}I(\epsilon)+o(\delta_k^{-1})).$
Let $N_k^*=2L(\sigma_1^2+\sigma_2^2)\delta_k^{-1/\beta}$ and $a_{\epsilon}(n)=n/N_k^*$.
Then for any $n \in B_{\epsilon}(k)$.
$$\frac{2L(\sigma_1^2+\sigma_2^2)-\epsilon}{2L(\sigma_1^2+\sigma_2^2)}\leq a_{\epsilon}(n)\leq \frac{2L(\sigma_1^2+\sigma_2^2)-\epsilon}{2L(\sigma_1^2+\sigma_2^2)}$$ 
We also notice that
\begin{eqnarray*}
&&P(\bar X_1(N_k)<\bar X_2^{\delta_k}(N_k))\\
&=&P(\bar X_1(N_k)<\bar X_2^{\delta_k}(N_k)|N_k \in B_{\epsilon}(k))P(N_k \in B_{\epsilon}(k))\\
&&+P(\bar X_1(N_k)<\bar X_2^{\delta_k}(N_k)|N_k \not\in B_{\epsilon}(k))P(N_k \not\in B_{\epsilon}(k)).
\end{eqnarray*}
Thus,
\begin{eqnarray*}
&&P(\bar X_1(N_k)<\bar X_2^{\delta_k}(N_k))\\ 
&\leq& P(\bar X_1(N_k)<\bar X_2^{\delta_k}(N_k)|N_k \in B_{\epsilon}(k))+P(N_k \not\in B_{\epsilon}(k))\\
&\leq& \sup_{n\in B_{\epsilon}(k)} P(\bar X_1(n) < \bar X_2^{\delta_k}(n))+P(N_k \not\in B_{\epsilon}(k))\\
&=& \sup_{n\in B_{\epsilon}(k)} P\left(\bar X_1(N_k^* a_\epsilon(n)) < \bar X_2^{\delta_k}(N_k^* a_{\epsilon}(n))\right)+P(N_k \not\in B_{\epsilon}(k))\\
&\leq& \exp\left(-\delta_k^{-(1/\beta-2)}L\left(\frac{L(\sigma_1^2+\sigma_2^2)-\epsilon}{L(\sigma_1^2+\sigma_2^2)}\right)+ o\left(\delta_k^{-(1/\beta-2)}\right)\right)(1+o(1)),
\end{eqnarray*}
where the second inequality use the fact that $S_i^2(n)$ is independent of $\bar X_i(n)$, 
the third inequality follows from the proof of Theorem \ref{th:mdp} and the fact that $0<1/\beta-2<1$.
Similarly,
\begin{eqnarray*}
&&P(\bar X_1(N_k)<\bar X_2^{\delta_k}(N_k)) \\
&\geq& P(\bar X_1(N_k)<\bar X_2^{\delta_k}(N_k)|N_k \in B_{\epsilon}(k))P(N_k \in B_{\epsilon}(k))\\
&\geq& \inf_{n\in B_{\epsilon}(k)} P(\bar X_1(n) < \bar X_2^{\delta_k}(n))P(N_k\in B_{\epsilon}(k))\\
&\geq& \exp\left(-\delta_k^{-(1/\beta-2)}L\left(\frac{L(\sigma_1^2+\sigma_2^2)+\epsilon}{L(\sigma_1^2+\sigma_2^2)}\right)+ o\left(\delta_k^{-(1/\beta-2)}\right)\right)(1+o(1)).
\end{eqnarray*}
As $L\delta_k^{-(1/\beta-2)}=\log(\alpha_k)$ and $\epsilon$ can be arbitrarily small, we have 
$$\lim_{k\rightarrow \infty} \log P\left(\bar X_1(N_k)<\bar X_2^{\delta_k}(N_k)\right)/\log(\alpha_k)=1.$$

\section{Comparison of the Three Asymptotic Regimes} \label{sec:compare}
The three asymptotic regimes are closely related to each other. Figure \ref{fig:demo} provide an overview of their relationships. We've established the three solid arrows in the figure in \S \ref{sec:main}. We next show the two dotted arrows for some special cases (Lemma \ref{lm:clt} and Lemma \ref{lm:ldp}).
\begin{figure}[htb]
{
\centering
\includegraphics[width=0.5\textwidth]{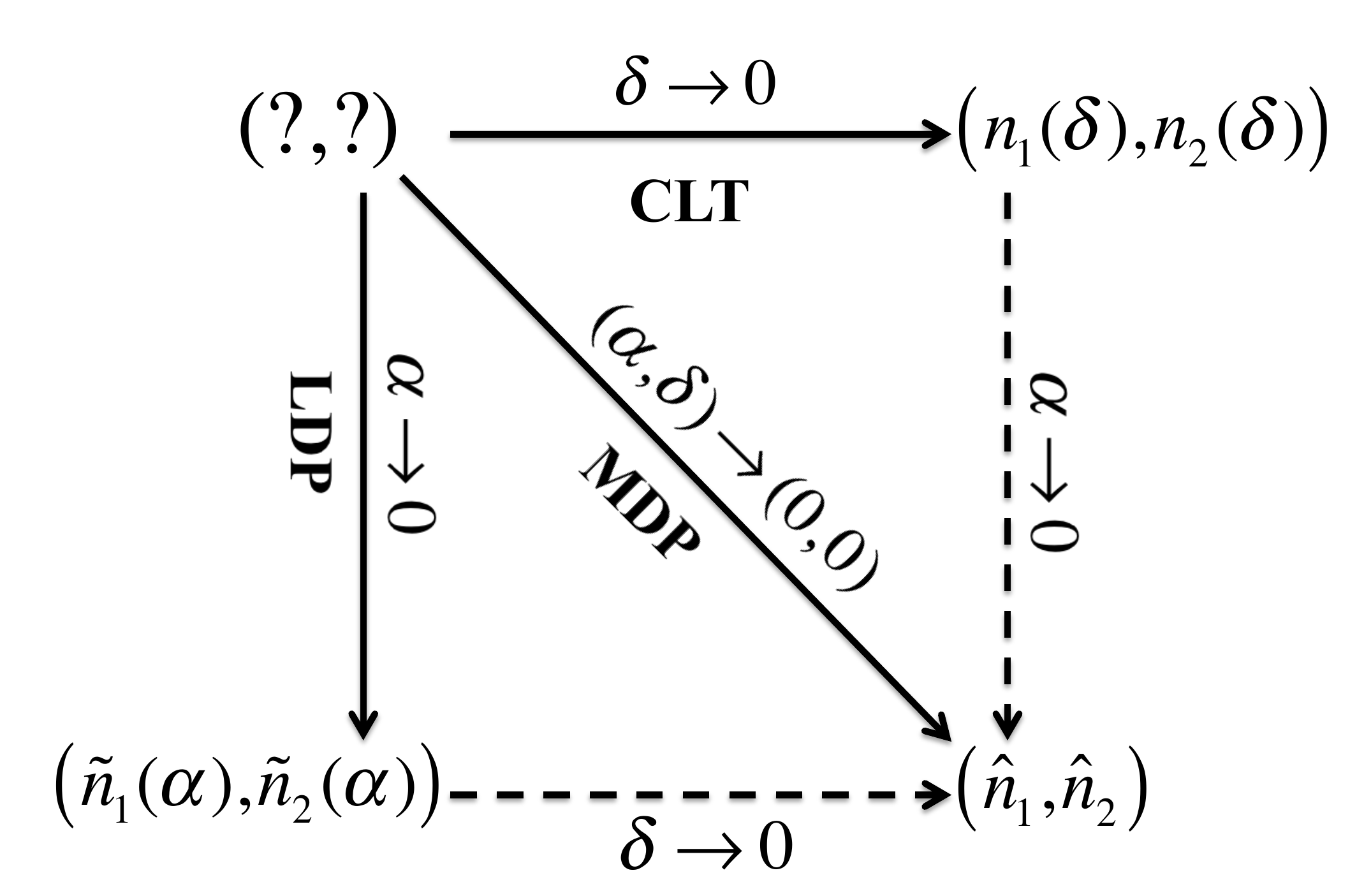}
\caption{Relationship of the three asymptotic regimes. (CLT: Central Limit Theorem, LDP: Large Deviation Principle, MDP: Moderate Deviation Principle)\label{fig:demo}}
}
\end{figure}

\begin{lemma}\label{lm:clt}
The $(1-\alpha)$-th quantile of standard Normal distribution, $z_{\alpha}$, satisfies,
$$\frac{z_{\alpha}^2}{2\log(1/\alpha)}\rightarrow 1 ~~\mbox{ as $\alpha\rightarrow 0$}.$$
\end{lemma}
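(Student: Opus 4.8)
The plan is to reduce the statement to the classical Gaussian tail (Mills ratio) asymptotics. Write $\bar\Phi(x) := P(N(0,1) > x)$ and $\phi(x) := (2\pi)^{-1/2}e^{-x^2/2}$, so that by the definition of the quantile we have $\bar\Phi(z_\alpha) = \alpha$. The first thing I would record is that $z_\alpha \to \infty$ as $\alpha \to 0$: since $\bar\Phi$ is continuous and strictly decreasing onto $(0,1)$, its inverse maps $\alpha \downarrow 0$ to $z_\alpha \uparrow \infty$. This is the fact that makes the polynomial corrections below negligible, so it is worth stating up front.

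Next I would invoke the two-sided bound, valid for all $x>0$,
$$\left(\frac{1}{x}-\frac{1}{x^3}\right)\phi(x) \;\le\; \bar\Phi(x) \;\le\; \frac{1}{x}\,\phi(x),$$
which follows from integration by parts (or simple comparison) applied to $\int_x^\infty e^{-t^2/2}\,dt$. Taking logarithms, this gives, as $x\to\infty$,
$$\log \bar\Phi(x) = -\frac{x^2}{2} - \log x - \frac12\log(2\pi) + O(x^{-2}).$$
Substituting $x = z_\alpha$ and using $\log\bar\Phi(z_\alpha) = \log\alpha$ yields
$$\log(1/\alpha) = \frac{z_\alpha^2}{2} + \log z_\alpha + \frac12\log(2\pi) + O(z_\alpha^{-2}).$$
Dividing through by $z_\alpha^2/2$ and letting $\alpha\to 0$ (hence $z_\alpha\to\infty$), the terms $\log z_\alpha$, $\frac12\log(2\pi)$ and $O(z_\alpha^{-2})$, each divided by $z_\alpha^2/2$, tend to $0$; therefore $2\log(1/\alpha)/z_\alpha^2 \to 1$, which is the claim after taking reciprocals.

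There is no genuinely hard step here; the only point requiring care is that the correction terms are $o(z_\alpha^2)$, which is exactly why the preliminary observation $z_\alpha\to\infty$ is needed. If one prefers not to quote the Mills-ratio inequality, the same bound can be produced on the spot by the substitution $t = x + s/x$ in the tail integral; but quoting the standard estimate keeps the argument short. The logarithmic scale is essential: only $\log\alpha$ and $z_\alpha^2$ are pinned down to leading order, consistent with the logarithmic sense in which the large and moderate deviation results of the paper are stated.
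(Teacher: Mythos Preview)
Your proof is correct and follows essentially the same route as the paper: both use the two-sided Mills-ratio bound $(\tfrac{1}{x}-\tfrac{1}{x^3})\phi(x)\le\bar\Phi(x)\le\tfrac{1}{x}\phi(x)$ together with $z_\alpha\to\infty$, and then compare $\log(1/\alpha)$ with $z_\alpha^2/2$. The only cosmetic difference is that you package the bounds as a single asymptotic expansion $\log\bar\Phi(x)=-x^2/2-\log x-\tfrac12\log(2\pi)+O(x^{-2})$, whereas the paper carries the upper and lower bounds separately through the limit; the content is the same.
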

Lemma \ref{lm:clt} implies for $\alpha$ small enough, and $\alpha_k=\alpha$, we have $n_i^*(\delta_k)\approx \hat n_i^*(k)$, $n_i^{e}(\alpha)\approx \hat n_i^{e}(k)$, and $n^{in}(\alpha_k)\approx \hat n^{in}(k)$.

For the large deviation regime, when we impose equal sample sizes from the two systems, then we have
$P(\bar X_1(n)<\bar X_2^{\delta}(n))=P(\bar X_2^0(n)-\bar X_1(n)>\delta)$. Let $\psi(\theta):=\log E[\exp(X_2^0-X_1)]$.
We also define
$\mathcal{D}:=\{\theta \in \mathbb{R}: \psi(\theta)<\infty\}$ and $\mathcal{S}:=\{\psi^{\prime}(\theta): \theta \in \mathcal{D}\}$.
If we write $G_e(\delta)=\sup_{\theta}\{\theta\delta-\psi(\theta)\}$, then $\tilde n_1^e(\alpha)=\tilde n_2^e(\alpha)=\log(1/\alpha)/G_e(\delta)$.
\begin{lemma} \label{lm:ldp}
For $\delta \in \mathcal{S}$,
$$G_e(\delta)=\frac{1}{2}\frac{1}{\sigma_1^2+\sigma_2^2}\delta^2-\frac{1}{6}\frac{E[(X_2^{0}-X_1)^3]}{\left(\sigma_1^2+\sigma_2^2\right)^3}\delta^3+ O (\delta^4).$$
\end{lemma}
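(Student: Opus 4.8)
The plan is to recognize $G_e(\delta)$ as the convex conjugate (Fenchel--Legendre transform) of $\psi$ and Taylor-expand it around $\delta=0$. Write $Y:=X_2^0-X_1$, so that $\psi(\theta)=\log E[e^{\theta Y}]$ is the cumulant generating function of $Y$ and $G_e(\delta)=\sup_\theta\{\theta\delta-\psi(\theta)\}=\psi^*(\delta)$. Since $X_1$ and $X_2^0$ are independent with common mean $\mu_1$, we have $E[Y]=0$, $\mathrm{Var}(Y)=\sigma_1^2+\sigma_2^2=:\sigma^2$, and third cumulant $\psi'''(0)=E[Y^3]=E[(X_2^0-X_1)^3]=:\mu_3$. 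Under Assumption \ref{ass:ldp} (strengthened so that the MGF of $Y$ is finite on a two-sided neighborhood of $0$, which is implicit in the hypothesis $\delta\in\mathcal{S}$ with $\mathcal{S}$ a nondegenerate interval around the origin), $\psi$ is finite and real-analytic near $0$, with $\psi(0)=0$, $\psi'(0)=0$, $\psi''(0)=\sigma^2>0$, hence $\psi(\theta)=\tfrac12\sigma^2\theta^2+\tfrac16\mu_3\theta^3+O(\theta^4)$ as $\theta\to0$.

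Next I would locate the maximizer. For $\delta$ small the supremum defining $G_e(\delta)$ is attained at the unique $\theta^*(\delta)$ solving the first-order condition $\psi'(\theta^*(\delta))=\delta$; the restriction $\delta\in\mathcal{S}$ is exactly what guarantees an interior maximizer, and since $\psi''(0)=\sigma^2>0$ the implicit function theorem makes $\theta^*$ analytic in $\delta$ near $0$ with $\theta^*(0)=0$. Inverting the power series $\psi'(\theta)=\sigma^2\theta+\tfrac12\mu_3\theta^2+O(\theta^3)$ gives $\theta^*(\delta)=\delta/\sigma^2-\tfrac{\mu_3}{2\sigma^6}\delta^2+O(\delta^3)$.

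Finally I would substitute, most cleanly via the envelope identity: differentiating $G_e(\delta)=\theta^*(\delta)\delta-\psi(\theta^*(\delta))$ and using $\psi'(\theta^*(\delta))=\delta$ yields $G_e'(\delta)=\theta^*(\delta)$. Since also $G_e(0)=\psi^*(0)=-\inf_\theta\psi(\theta)=-\psi(0)=0$, integrating the expansion of $\theta^*$ term by term gives
$$G_e(\delta)=\int_0^\delta\theta^*(s)\,ds=\frac{1}{2\sigma^2}\delta^2-\frac{\mu_3}{6\sigma^6}\delta^3+O(\delta^4),$$
which is the claimed formula since $\sigma^6=(\sigma_1^2+\sigma_2^2)^3$. (Equivalently one can substitute $\theta^*(\delta)$ directly into $\theta^*(\delta)\delta-\psi(\theta^*(\delta))$, expand $(\theta^*)^2$ and $(\theta^*)^3$ through order $\delta^3$, and collect terms; the two $\mu_3\delta^3$ contributions combine to the stated coefficient.)

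I expect the only genuine work to be in justifying that the errors are truly $O(\delta^4)$: this needs a uniform bound on the fourth-order Taylor remainder of $\psi$ on a fixed neighborhood of $0$, propagated through the series inversion and the integration, together with the check that $\theta^*(\delta)$ stays inside the region where that bound holds for all sufficiently small $\delta$. This is routine given analyticity of $\psi$ near $0$ (Taylor with Lagrange remainder, or Cauchy estimates for the radius of convergence), but it is the step where care is required. The preliminary reduction also quietly uses that $Y$ has finite exponential moments on a two-sided neighborhood of $0$; if one only wants the stated third-order expansion it suffices to have $0\in\mathcal{D}^o$ together with finite moments up to order four, and I would record this as the standing hypothesis.
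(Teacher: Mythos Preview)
Your proposal is correct and follows essentially the same approach as the paper: both Taylor-expand $G_e$ around $\delta=0$ using the first-order condition $\psi'(\theta^*(\delta))=\delta$ and the relations $\theta^{*\prime}(0)=1/\psi''(0)$, $\theta^{*\prime\prime}(0)=-\psi'''(0)/\psi''(0)^3$. Your use of the envelope identity $G_e'(\delta)=\theta^*(\delta)$ (followed by integration) is a slight streamlining of the paper's direct computation of $G_e'$, $G_e''$, $G_e'''$, but the underlying mechanism is identical.
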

Lemma \ref{lm:ldp} implies that for $\delta$ small, $\delta_k=\delta$, $\tilde n_i^e(\alpha_k)\approx \hat n_i^e(k)$.

\subsection{Pre-Limit Performance}
We next provide some comments about the pre-limit performance, i.e. for fixed $\alpha$ and $\delta$. For simplicity of exposition, we shall restrict our discussion to the case of equal sample sizes.

In the Central Limit Theorem regime, when $X_1$ and $X_2^{\delta}$ are Gaussian random variables, we have 
$P\left(\bar X_1(n_1^e(\delta))<\bar X_2^{\delta}(n_2^e(\delta))\right)=\alpha$. 
In general, the performance of ``$n_i^e(\delta)$" depends on the ``rate" of convergence of the central limit theorem.  
Assume that $E[(X_2^0-X_1)^4]<\infty$ and $X_2^0-X_1$ is non-lattice.
Let 
$$S_{kew}:=E\left[\left((X_2^{0}-X_1-0)/\sqrt{\sigma_1^2+\sigma_2^2}\right)^3\right]$$ 
denote the skewness of $X_2^{0}-X_1$. Skewness is a measure of asymmetry of a random variable about its mean. We also write
$$K_{ur}:=E\left[\left((X_2^{0}-X_1-0)/\sqrt{\sigma_1^2+\sigma_2^2}\right)^4\right]$$ 
as the Kurtosis of $X_2^{0}-X_1$. Kurtosis measures the heaviness of the tail of a random variable.
Let $\bar\Phi(\cdot)$ and $\phi(\cdot)$ denote the tail cumulative distribution function and the probability density function of a standard normal distribution.
Using the Edgeworth expansion \cite{Shao:1995}, we can show that
\begin{eqnarray*}
&&P\left(\bar X_1(n_1^e(\delta))<\bar X_2^{\delta}(n_1^e(\delta))\right)-\alpha\\
&=&\frac{\phi(z_{\alpha})}{z_{\alpha}}\left\{\frac{S_{kew}(z_\alpha^2-1)}{6\sqrt{\sigma_1^2+\sigma_2^2}}\delta+\left(\frac{(K_{ur}-3)(z_{\alpha}^2-3)}{24(\sigma_1^2+\sigma_2^2)}+\frac{S_{kew}^2(z_{\alpha}^4-10z_{\alpha}^2+15)}{72(\sigma_1^2+\sigma_2^2)}\right)\delta^2\right\}+o(\delta^2)
\end{eqnarray*}

%\begin{eqnarray*}
%&& P\left(\bar X_1(n_1^e(\delta))<\bar X_2^{\delta}(n_1^e(\delta))\right)-\alpha\\
%&=& P\left(\frac{1}{\sqrt{n_1^e(\delta)}}\sum_{k=1}^{n_1^e(\delta)}\frac{X_{2,k}^{0}-X_{1,k}}{\sqrt{\sigma_1^2+\sigma_2^2}}>z_\alpha\right)-\bar\Phi(z_{\alpha})\\
%&=&\phi(z_{\alpha})\frac{S_{kew}(z_\alpha^2-1)}{6\sqrt{n_1^e(\delta)}}
%+\phi(z_{\alpha})\left(\frac{(K_{ur}-3)(z_{\alpha}^3-3z_{\alpha})}{24n_1^e(\delta)}+\frac{S_{kew}^2(z_{\alpha}^5-10z_{\alpha}^3+15z_{\alpha})}{72n_1(\delta)}\right)+o(1/n_1^e(\delta))\\
%&=&\phi(z_{\alpha})\frac{S_{kew}(z_\alpha^2-1)}{6 z_{\alpha}\sqrt{\sigma_1^2+\sigma_2^2}}\delta
%+\phi(z_{\alpha})\left(\frac{(K_{ur}-3)(z_{\alpha}^3-3z_{\alpha})}{24z_\alpha^2(\sigma_1^2+\sigma_2^2)}+\frac{S_{kew}^2(z_{\alpha}^5-10z_{\alpha}^3+15z_{\alpha})}{72z_\alpha^2(\sigma_1^2+\sigma_2^2)}\right)\delta^2+o(\delta^2)\\
%&=&\frac{\phi(z_{\alpha})}{z_{\alpha}}\left\{\frac{S_{kew}(z_\alpha^2-1)}{6\sqrt{\sigma_1^2+\sigma_2^2}}\delta+\left(\frac{(K_{ur}-3)(z_{\alpha}^2-3)}{24(\sigma_1^2+\sigma_2^2)}+\frac{S_{kew}^2(z_{\alpha}^4-10z_{\alpha}^2+15)}{72(\sigma_1^2+\sigma_2^2)}\right)\delta^2\right\}+o(\delta^2)
%\end{eqnarray*}
We make the following observations.
a) $\phi(z_{\alpha})/z_{\alpha} > \alpha$ with $\lim_{\alpha\rightarrow 0}\frac{\phi(z_{\alpha})/z_{\alpha}}{\alpha}=1$.
b) For distribution with large skewness or Kurtosis, the pre-limit PIS may be quite different from $\alpha$.  
A common practice to reduce the approximation error (improve the rate of convergence) is to use the Cornish-Fisher expansion \cite{Shao:1995} to refine the scaling parameter $z_{\alpha}$, but this would require us to know higher moments of the sample distributions. 

For the large deviation regime, we first notice that, using Chernoff's bound, 
$$P\left(\bar X_1(\tilde n_1^e(\alpha))<\bar X_2^{\delta}(\tilde n_2^e(\alpha)\right) \leq \exp(-\tilde n_1^e(\alpha)G(\delta))=\alpha.$$
To quantify how much smaller $P\left(\bar X_1(\tilde n_1(\alpha))<\bar X_2^{\delta}(\tilde n_1(\alpha))\right)$ is, compared to $\alpha$, we 
refer to a refinement of the large deviation asymptotic approximation due to \cite{Bahadur:1960}.
Assume that $\psi(\theta)$ is steep on the right and $X_1-X_2^0$ is non-lattice. We denote $\theta(\delta):=\arg\min\{\theta \delta - \psi(\theta)\}$. 
Then we can show that
$$P\left(\bar X_1(\tilde n_1^e(\alpha))<\bar X_2^{\delta}(\tilde n_2^e(\alpha)\right)=
\frac{\alpha}{\sqrt{\log(1/\alpha)}} \frac{\sqrt{G(\delta)}}{\sqrt{2\pi \psi^{\prime\prime}(\delta)}\theta(\delta)}\left(1+O\left(\frac{\sqrt{G(\delta)}}{\sqrt{\log(1/\alpha)}}\right)\right).$$
%\begin{eqnarray*}
%P\left(\bar X_1(\tilde n_1(\alpha))<\bar X_2^{\delta}(\tilde n_1(\alpha)\right)
%&=&\frac{\exp(-\tilde n_1(\alpha)G(\delta))}{\sqrt{2\pi\psi^{\prime\prime}(\delta)\tilde n_1(\alpha)}\theta(\delta)}\left(1+O\left(\frac{1}{\sqrt{\tilde n_1(\alpha)}}\right)\right)\\
%&=& \frac{\alpha}{\sqrt{\log(1/\alpha)}} \frac{\sqrt{G(\delta)}}{\sqrt{2\pi \psi^{\prime\prime}(\delta)}\theta(\delta)}\left(1+O\left(\frac{\sqrt{G(\delta)}}{\sqrt{\log(1/\alpha)}}\right)\right).\\
%\end{eqnarray*}
As
%$\lim_{\delta \rightarrow 0}\frac{\sqrt{G(\delta)}}{\sqrt{\psi^{\prime\prime}(\delta)}\theta(\delta)}=\frac{1}{\sqrt{2}},$
$\lim_{\delta \rightarrow 0}\sqrt{G(\delta)}/(\sqrt{\psi^{\prime\prime}(\delta)}\theta(\delta))=1/\sqrt{2}$,
when $\delta$ is small enough, $P\left(\bar X_1(\tilde n_1(\alpha))<\bar X_2^{\delta}(\tilde n_1(\alpha)\right)$ decays at rate $\alpha /\sqrt{\log(1/\alpha)}$ approximately, which is slightly faster than $\alpha$. Therefore, sampling rules derived from the large deviation regime 
provide a guarantee on PIS but tend to over-sample in practical examples.

For the moderate deviation regime, we first notice that
$z_{\alpha}^2<2\log(1/\alpha)$ for fixed value of $\alpha$.
Thus, we tend to sample more than ``needed" when the central limit theorem regime works well. 
However, this also provides us with a safety buffer when the central limit theorem regime doesn't work well.

\subsection{Numerical Comparison}
The following numerical experiments illustrate the pre-limit performance of the sampling rules derived from the three asymptotic regimes (Table \ref{tab:num1} \& \ref{tab:num2}). The probability of incorrect selection are calculated based $10^6$ independent experiments.
In Table \ref{tab:num1}, we assume both systems have Exponential sample distributions.  
We observe that in this case, the CLT regime sampling rule achieves the desired probability of incorrect selection, but the other two regimes overshoot the probability of incorrect selection, i.e. $PIS\ll 0.05$. Table \ref{tab:num2} illustrate an extreme example, where system 1 has constant output while system 2 has Bernoulli sample distribution with very small probability of success (highly skewed). There the CLT regime doesn't achieve the desired probability of incorrect selection while the LD regime over-samples.
We would also like to point out that among the three regimes, the LD regime is the only one that is guaranteed to have $PIS<\alpha$ regardless of the sample distributions. %However, we need to have access to the large deviation rate function to use the results in the LD regime.
\begin{table}[htb]
\centering
\caption{Simulation experiments for Exponential samples ($\mu_1=1$, $\mu_2^{\delta}=0.9091$, $\alpha=0.05$).\label{tab:num1}}
\begin{tabular}{rll}
\hline
Regime & $n$ & Probability of Incorrect Selection\\ \hline
CLT Regime & $598$ & $0.0497 \pm0.0002$\\
LD Regime & $1320$ & $0.0072 \pm 0.0001$\\
MD Regime & $1325$ & $0.0071 \pm 0.0001$ \\
\hline
\end{tabular}
\end{table}

\begin{table}[htb]
\centering
\caption{Simulation experiments for constant and Bernoulli samples ($\mu_1=0.008$, $\mu_2^{\delta}=0.001$, $\alpha=0.01$).\label{tab:num2}}
\begin{tabular}{rll}
\hline
Regime & $n$ & Probability of Incorrect Selection\\ \hline
CLT Regime & $111$ & $0.1057 \pm0.0003$\\
LD Regime & $477$ & $0.0015 \pm 0.00003$\\
MD Regime & $188$ & $0.0156 \pm 0.0001$ \\
\hline
\end{tabular}
\end{table}

%\section{CONCLUDING REMARKS}
%\label{sec:con}

%\section*{ACKNOWLEDGMENTS}

\appendix

\section{Proofs} \label{app:quadratic}

\begin{proof}[Proof of Lemma \ref{lm:clt}]
We first notice that
$\left(\frac{1}{x}-\frac{1}{x^3}\right)\phi(x) \leq \bar \Phi(x)\leq \frac{1}{x}\phi(x).$
As $z_{\alpha} \rightarrow \infty$ as $\alpha \rightarrow 0$, then
\begin{eqnarray*}
\lim_{\alpha \rightarrow 0} \frac{z_{\alpha}^2}{2\log(1/\alpha)}=\lim_{\alpha \rightarrow 0} \frac{z_{\alpha}^2}{-2\log(\bar\Phi(z_{\alpha}))}
&\geq& \lim_{\alpha \rightarrow 0} \frac{z_{\alpha}^2}{-2\log(\phi(z_{\alpha}))-2\log(1/z_{\alpha})}\\
&=&\lim_{\alpha \rightarrow 0}\frac{z_{\alpha}^2}{z_{\alpha}^2+2\log(z_{\alpha})}=1
\end{eqnarray*}
Similarly,
$$\lim_{\alpha \rightarrow 0} \frac{z_{\alpha}^2}{2\log(1/\alpha)}\leq \lim_{\alpha \rightarrow 0} \frac{z_{\alpha}^2}{-2\log(\phi(z_\alpha))-\log(1/z_{\alpha}-1/z_{\alpha}^3)}=1.$$
Thus, 
$$\lim_{\alpha \rightarrow 0} \frac{z_{\alpha}^2}{2\log(1/\alpha)}=1.$$
\end{proof}

\begin{proof}[Proof of Lemma \ref{lm:ldp}]
Applying Taylor expansion, we have for $\delta \in \mathcal{S}$
$$G_e(\delta)=G_e(0)+G_e^{\prime}(0)\delta + \frac{1}{2}G_e^{\prime\prime}(0)\delta^2+\frac{1}{6}G_e^{\prime\prime\prime}(0)\delta^3+O (\delta^4)$$
Let $\theta(\delta):=\arg\min\{\theta\delta-\psi(\theta)\}$. Then $G_e(\delta)=\theta(\delta)\delta-\psi(\theta(\delta))$ and
\begin{eqnarray*}
G_e^{\prime}(\delta)&=&\theta(\delta)+\theta^{\prime}(\delta)\delta-\psi^{\prime}(\theta(\delta))\theta^{\prime}(\delta)\\
G_e^{\prime\prime}(\delta)&=&2\theta^{\prime}(\delta)+\theta^{\prime\prime}(\delta)\delta-\psi^{\prime\prime}(\theta(\delta))\theta^{\prime}(\delta)^2-\psi^{\prime}(\theta(\delta))\theta^{\prime\prime}(\delta)\\
G_e^{\prime\prime\prime}(\delta)&=& 3\theta^{\prime\prime}(\delta)+\theta^{\prime\prime\prime}(\delta)\delta-\psi^{\prime\prime\prime}(\theta(\delta))\theta^{\prime}(\delta)^3-2\psi^{\prime\prime}(\theta(\delta))\theta^{\prime}(\delta)\theta^{\prime\prime}(\delta)\\
&&-\psi^{\prime\prime}(\theta(\delta))\theta^{\prime\prime}(\delta)\theta^{\prime}(\delta)-\psi^{\prime}(\theta(\delta))\theta^{\prime\prime\prime}(\delta)
\end{eqnarray*}
We also notice that $\psi^{\prime}(\theta(\delta))=\delta$
Thus, 
$\theta^{\prime}(\delta)=\frac{1}{\psi^{\prime\prime}(\theta(\delta))} \mbox{ and }
\theta^{\prime\prime}(\delta)=-\frac{\psi^{\prime\prime\prime}(\theta(\delta))}{\psi^{\prime\prime}(\theta(\delta))^3}$.
The results then follows by noting that $\psi(0)=0$, and $\psi^{(k)}(0)=E[(X_2^0-X_1)^k]$ for $k=1,2,\dots$.
\end{proof}

% Please don't exchange the bibliographystyle style
\bibliographystyle{plain}
% AUTHOR: Include your bib file here
\bibliography{ordinal}

\end{document}